\newtheorem{thm}{Theorem}[section] 
\newtheorem{cor}[thm]{Corollary}
\newtheorem{prop}[thm]{Proposition}
\theoremstyle{definition}
\newtheorem{rem}[thm]{Remark}
\newtheorem{exmpl}[thm]{Example}
\newtheorem{defn}[thm]{Definition}
\newtheorem{algo}[thm]{Algorithm}
\newcommand\operA[2]{{\if!#2!\operatorname{#1}\else{\operatorname{#1}_{#2}^{\phantom{I}}}\fi}} 
\newcommand\Cref[1]{{Corollary~\ref{#1}}}%
\def\tr{{\operatorname{Tr}}}
\def\norm{{\operatorname{Norm}}}
\newcommand{\Trace}[1][]{\if!#1!\operatorname{Tr}\else{\operatorname{Tr}_{#1}^{\phantom{I}}}\fi} 
\long\def\forget#1\forgotten{{}} %
\def\({\left(}
\def\){\right)}
\newif\iffurther
\newif\ifXY 
\journal{Journal of Algebra and its Applications}
\begin{document}

\begin{frontmatter}

\title{Polynomial Equations over Octonion Algebras}
\author{Adam Chapman}
\ead{adam1chapman@yahoo.com}
\address{Department of Computer Science, Tel-Hai Academic College, Upper Galilee, 12208 Israel}

\begin{abstract}
In this paper we present a complete method for finding the roots of all polynomials of the form $\phi(z)=c_n z^n+c_{n-1} z^{n-1}+\dots+c_1 z+c_0$ over a given octonion division algebra. When $\phi(z)$ is monic we also consider the companion matrix and its left and right eigenvalues and study their relations to the roots of $\phi(z)$, showing that the right eigenvalues form the conjugacy classes of the roots of $\phi(z)$ and the left eigenvalues form a larger set than the roots of $\phi(z)$.
\end{abstract}

\begin{keyword}
Polynomial Equations, Division Algebras, Octonion Algebras, Companion Matrix, Right Eigenvalues, Left Eigenvalues
\MSC[2010] primary 17D05; secondary 15A18, 15B33
\end{keyword}

\end{frontmatter}

\section{Introduction}

The question of finding the roots of a given (monic) standard polynomial $\phi(z)=z^n+c_{n-1} z_{n-1}+\dots+c_1 z+c_0$ over any quaternion division algebra $Q$ with center $F$ was fully solved in \cite{ChapmanMachen:2017}:
the polynomial has an assigned ``companion polynomial" $\Phi(z)$ whose degree is $2n$ and its coefficients live in $F$, which is also the companion polynomial of the embedding of the companion matrix $C_\phi$ of $\phi(z)$ into $M_{2n}(K)$ where $K$ is an arbitrary maximal subfield of $Q$.
The left eigenvalues of $C_\phi$ coincide with the roots of $\phi(z)$, and the right eigenvalues of $C_\phi$ coincide with the roots of $\Phi(z)$.
The roots of $\Phi(z)$ group into (up to $n$) complete conjugacy classes.
For each such conjugacy class, either the entire class consists of roots of $\phi(z)$, or it contains exactly one root of $\phi(z)$.
Earlier papers on this subject include \cite{JanovskaOpfer2010} (solving equations over the real quaternion algebra), \cite{Abrate2009} and \citep{Chapman2015} (solving quadratic equations over arbitrary quaternion algebras), and \cite{WangZhang} (solving monic quadratic equations over the real octonion algebra).

The aim of this paper is to extend these results to octonion division algebras.
A part of the motivation comes from recent results in physics that translate physical problems to equations over octonions and more general Cayley-Dickson algebras, see for example \cite{Mironov:2017}.
We consider \emph{standard polynomials} over an octonion division algebra $A$. These are polynomials with coefficients appearing only on the left-hand side of the variable:
$
\phi(z)=c_n z^n+c_{n-1} z^{n-1}+\dots+c_1 z+c_0
$
where $c_i\in A$. For any $\lambda \in A$ the substitution of $\lambda$ in $\phi(z)$ is defined to be $c_n \lambda^n+c_{n-1} \lambda^{n-1}+\dots+c_1 \lambda+c_0$ and is denoted by $\phi(\lambda)$.
By a \emph{root} of a standard polynomial $\phi(z)$ over $A$ we mean an element $\lambda \in A$ satisfying $\phi(\lambda)=0$.
We denote by $R(\phi)$ the set of roots of $\phi(z)$.
We define the companion polynomial $\Phi(z)$ and the companion matrix in the same manner as in the quaternionic case. We show that the roots of $\Phi(z)$ are the conjugacy classes of the roots of $\phi(z)$.
We prove that for each class in $R(\Phi)$ either the entire class is in $R(\phi)$ or it contains a unique element form $R(\phi)$. We prove that the right eigenvalues of $C_\phi$ are exactly the roots of $\Phi(z)$ and also describe its left eigenvalues.
Unlike the quaternionic case, the left eigenvalues of $C_\phi$ turn out to be a larger set than the roots of $\phi(z)$ and we provide an example of this phenomenon.

Note that our definition of a standard polynomial places the coefficients on the left-hand side of the variable, but clearly the same methods can be applied to solving polynomials with coefficients appearing on the right-hand side.
Given a standard polynomial $\phi(z)=c_n z^n+\dots+c_1 z+c_0$ over an octonion algebra $A$, we define its ``mirror" polynomial to be $\tilde{\phi}(z)=z^n c_n+\dots+z c_1+c_0$.

\section{Octonion Algebras}

Given a field $F$, a quaternion algebra $Q$ over $F$ is a central simple $F$-algebra of degree 2 (i.e., dimension 4 over $F$).
When $\operatorname{char}(F) \neq 2$, it has the structure
$$Q=F\langle i,j : i^2=\alpha, j^2=\beta, i j=-j i \rangle$$
for some $\alpha,\beta\in Q^\times$,
and when $\operatorname{char}(F)=2$, it has the structure
$$Q=F\langle i,j : i^2+i=\alpha, j^2=\beta, j i+i j=j \rangle$$
for some $\alpha \in F$ and $\beta \in F^\times$.
The algebra $Q$ is endowed with a symplectic involution mapping $a+bi+cj+dij$ to $a-bi-cj-dij$ when $\operatorname{char}(F)\neq 2$ and to $a+b+bi+cj+dij$ when $\operatorname{char}(F)=2$.

An \emph{Octonion} algebra over $F$ is an algebra $A$ of the form $A=Q \oplus Q\ell$ where $Q$ is a quaternion algebra over $F$, and the multiplication table is given by
$(p+q \ell)\cdot (r+s \ell)=pr+\ell^2 \bar{s}q+(sp+q\bar{r})\ell$ where $\bar{\ }$ stands for the symplectic involution on $Q$ and $\ell^2 \in F^\times$.
This involution extends to $A$ by the formula $\overline{p+q \ell}=\overline{p}-q \ell$.
The octonion algebra is endowed with a quadratic norm form $\norm : A \rightarrow F$ defined by $\norm(x)=\overline{x} \cdot x$ and a linear trace form $\tr : A \rightarrow F$ defined by $\tr(x)=x+\overline{x}$. Every two elements in $A$ live inside a quaternion subalgebra, unless $\operatorname{char}(F)=2$, in which case the two elements can also live inside a purely inseparable bi-quadratic field extension of $F$ inside $A$, for example the elements $j$ and $\ell$ in the construction above. 
In particular, the algebra is alternative. The algebra is a division algebra if and only if its norm form is anisotropic. For further reading on octonion algebras see \cite{SpringerVeldkamp} and \cite{BOI}.

\section{The Companion Polynomial}

The goal of this section is to give a deterministic algorithm for finding all the roots of a given standard polynomial over an octonion division algebra.

\begin{rem}
The relation $g \sim g' \Leftrightarrow \exists h \in A^\times : hgh^{-1}=g'$ is an equivalence relation for elements of a given octonion algebra $A$.
\end{rem}

\begin{proof}
It is enough to show that $g \sim g'$ if and only if $\tr(g)=\tr(g')$ and $\norm(g)=\norm(g')$.
Write $T=\tr(g)$ and $N=\norm(g)$. Both live inside $F$.
Then $g^2-Tg+N=0$.
Since the octonion algebra is alternative and $T,N \in F$, we can conjugate this equation by $h$ and obtain
$(hgh^{-1})^2-T(hgh^{-1})+N=0$, which means that the trace and norm of $hgh^{-1}$ are $T$ and $N$, resp.
In the opposite direction, suppose $g$ and $g'$ have the same trace and norm. If they live inside a quaternion subalgebra then they are conjugates in that subalgebra, and if the live inside a purely inseparable bi-quadratic field extension of $F$ then they must be equal.
\end{proof}

\begin{defn}
We define the ``companion polynomial" $\Phi(z)$ of a given polynomial $\phi(z)=c_n z^n+\dots+c_1 z+c_0$ over an  octonion algebra $A$ to be 
$$\Phi(z)=b_{2n} z^{2n}+\dots+b_1 z+b_0$$
with the coefficients defined in the following way:
for each $k \in \{0,\dots,2n\}$, if $k$ is odd then $b_k$ is the sum of all $\tr(\bar{c_i}c_j)$ with $0\leq i < j \leq n$ and $i+j=k$, and if $k=2m$ is even then $b_k$ is the sum of all $\tr(\overline{c_i}c_j)$ with $0\leq i < j \leq n$ and $i+j=k$ plus the element $\norm(c_m)$. (Recall that $\tr(\bar{c_i}c_j)=\bar{c_i}c_j+\bar{c_j}c_i$ and $\norm(c_m)=\overline{c_m}c_m$.)
\end{defn}

\begin{thm}\label{solvingequations}
Let $\phi(z)=c_n (z^n)+\dots+c_1 z+c_0$ be a standard polynomial over an octonion division algebra $A$ over a field $F$ with companion polynomial $\Phi(z)$. Then $R(\Phi) \supseteq R(\phi)$.
\end{thm}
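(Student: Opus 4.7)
My plan is to reduce the whole statement to a calculation inside the commutative subalgebra $F[\lambda]$, invoking alternativity only through two-element associativity (Artin's theorem) and the multiplicativity of the octonionic norm.

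First I would exploit the preceding remark: $\lambda$ is a root of $z^2-Tz+N\in F[z]$ with $T=\tr(\lambda)$, $N=\norm(\lambda)$. Iterating this quadratic relation produces scalars $a_k,d_k\in F$ with $\lambda^k=a_k+d_k\lambda$, and from $\lambda^i\lambda^j=\lambda^{i+j}$ one reads off $a_{i+j}=a_ia_j-Nd_id_j$ and $d_{i+j}=a_id_j+d_ia_j+Td_id_j$. Pulling the scalars past the $c_k$ (which is legal by bilinearity of the product) rewrites $\phi(\lambda)$ as $P+Q\lambda$, where $P=\sum_k a_k c_k$ and $Q=\sum_k d_k c_k$ lie in $A$. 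Since $\Phi(z)$ has coefficients in $F$, the same substitution gives $\Phi(\lambda)=\alpha+\beta\lambda$ with $\alpha,\beta\in F$. The hypothesis $\phi(\lambda)=0$ becomes the single $A$-equation $P=-Q\lambda$.

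Second, I would substitute $b_k=\sum_{i+j=k}\bar{c_i}c_j$ into $\alpha=\sum_k a_kb_k$ and $\beta=\sum_k d_kb_k$, then use the formulas for $a_{i+j},d_{i+j}$ and collect the four resulting cross-sums as bilinear products with a factor $\bar P$ or $\bar Q$ on the left and $P$ or $Q$ on the right (using $\sum_i a_i\bar{c_i}=\bar P$, etc.). This yields the closed forms
$$\alpha\,=\,\norm(P)-N\norm(Q),\qquad \beta\,=\,\tr(\bar P Q)+T\norm(Q).$$

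Third, I would substitute $P=-Q\lambda$. Multiplicativity of the norm on the octonion algebra gives $\norm(P)=\norm(Q)\norm(\lambda)=N\norm(Q)$, so $\alpha=0$. For $\beta$, I would note $\bar P=-\bar\lambda\,\bar Q$ and invoke Artin's theorem: the subalgebra of $A$ generated by $\lambda$ and $Q$ is associative, and it contains $\bar\lambda=T-\lambda$ and $\bar Q=\tr(Q)-Q$ as well. Therefore $(\bar\lambda\,\bar Q)Q=\bar\lambda(\bar Q Q)=\norm(Q)\bar\lambda$, so $\bar P Q=-\norm(Q)\bar\lambda$ and $\tr(\bar P Q)=-T\norm(Q)$. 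This forces $\beta=0$, hence $\Phi(\lambda)=0$.

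The main obstacle is the tempting but misleading expectation that $\Phi(\lambda)=\overline{\phi(\lambda)}\cdot\phi(\lambda)$: this factorization is correct at the level of the formal identity $\Phi(z)=\bar\phi(z)\phi(z)$ in a commuting variable, but fails pointwise after substitution because in a non-associative algebra one cannot freely reassociate $(\bar{c_i}\lambda^i)(c_j\lambda^j)$ as $\bar{c_i}c_j\lambda^{i+j}$ when $\bar{c_i},c_j,\lambda$ are three independent elements. The strategy above circumvents this by passing through $F[\lambda]$, where everything commutes, and confines the nonassociative manipulation to a single two-generator computation, so that Artin's theorem suffices.
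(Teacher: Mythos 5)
Your proof is correct, but it takes a genuinely different route from the paper. The paper establishes, via the linearized composition identities of \cite[Lemma 1.3.3]{SpringerVeldkamp} ($\norm(c_i)z^{2i}=\overline{c_i}(c_iz^{2i})$ and $\tr(\overline{c_i}c_j)z^{i+j}=\overline{c_i}(c_jz^{i+j})+\overline{c_j}(c_iz^{i+j})$), the pointwise identity $\Phi(z)=\sum_{i=0}^n\overline{c_i}\,(\phi(z)z^i)$ for every $z\in A$, from which $\phi(\lambda)=0\Rightarrow\Phi(\lambda)=0$ is immediate; that identity is then reused as the engine of Theorem~\ref{classes}. You instead reduce modulo the rank equation $\lambda^2=T\lambda-N$, obtain $\phi(\lambda)=P+Q\lambda$ and, purely by $F$-bilinearity (since $b_k=\sum_{i+j=k}\overline{c_i}c_j$), the closed form $\Phi(\lambda)=\bigl(\norm(P)-N\norm(Q)\bigr)+\bigl(\tr(\overline{P}Q)+T\norm(Q)\bigr)\lambda$, and then kill both coordinates from $P=-Q\lambda$ using norm multiplicativity and Artin's theorem; your closing observation about the trap $\Phi(\lambda)\neq\overline{\phi(\lambda)}\,\phi(\lambda)$ is exactly the right caution. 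What each approach buys: the paper's argument is shorter and yields a structural identity valid before any substitution, which is what makes the later analysis of $E(N,T)$ and $G(N,T)$ quick; yours avoids Lemma 1.3.3 altogether, needing only the rank equation, conjugation as an anti-automorphism, multiplicativity of the norm, and two-generated associativity, and in effect it independently re-derives (in reduced form) the formula $\Phi(z)=\norm(E)z^2+\tr(\overline{E}G)z+\norm(G)$ that the paper only obtains inside the proof of Theorem~\ref{classes}, so your $P,Q$ are precisely $G(N,T),E(N,T)$ and your computation front-loads part of that later theorem. One cosmetic point: when $\lambda\in F$ the scalars $a_k,d_k$ are not ``read off'' uniquely from $\lambda^k=a_k+d_k\lambda$, so it is cleaner to define them by the recursion in $F[t]/(t^2-Tt+N)$ and note that the stated identities then hold in all cases; this does not affect the validity of the argument.
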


\begin{proof}
By the \cite[Lemma 1.3.3]{SpringerVeldkamp}, for every $z \in A$ and $i,j \in \{0,\dots,n\}$, $\norm(c_i) z^{2i}=\overline{c_i}(c_i z^{2i})$ and $\tr(\overline{c_i}c_j) z^{i+j}=\overline{c_i}(c_j z^{i+j})+\overline{c_j}(c_i z^{i+j})$.
Therefore
$$\Phi(z)=\sum_{i=0}^n \overline{c_i} (\sum_{j=0}^n c_j (z^{i+j}))=\sum_{i=0}^n \overline{c_i} (\phi(z) z^i).$$
Consequently, if $\phi(\lambda)=0$ for a certain $\lambda \in A$, then also $\Phi(\lambda)=0$.
\end{proof}

All the coefficients of $\Phi(z)$ are central, i.e. belong to $F$ (because they are sums of traces and norms of elements in $A$). Therefore, the roots of $\Phi(z)$ depend only on their norm and trace, i.e. the set $R(\Phi)$ is a union of conjugacy classes.

\begin{thm}\label{classes}
Given companion polynomial $\Phi(z)$ of $\phi(z)$, the set $R(\Phi)$ is the union of the conjugacy classes of the elements of $R(\phi)$. Each such class is either fully contained in $R(\phi)$ or has exactly one representative there.
\end{thm}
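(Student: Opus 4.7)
The plan is to parametrize $\phi$ and $\Phi$ along a fixed conjugacy class $[\lambda]$ and then read off when roots appear. Let $\lambda\in R(\Phi)$ and set $T=\tr(\lambda)$, $N=\norm(\lambda)$, so every $\lambda'\in[\lambda]$ satisfies $\lambda'^{2}=T\lambda'-N$. Iterating yields scalars $p_{k},q_{k}\in F$ (depending only on $T,N$) with $\lambda'^{k}=p_{k}+q_{k}\lambda'$ and the recursion $p_{k+1}=-Nq_{k}$, $q_{k+1}=p_{k}+Tq_{k}$. Consequently $\phi(\lambda')=P+Q\lambda'$ for every $\lambda'\in[\lambda]$, where $P=\sum_{k}p_{k}c_{k}$ and $Q=\sum_{k}q_{k}c_{k}$ depend only on the class and not on $\lambda'$.

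I would then substitute this into the identity $\Phi(\lambda')=\sum_{i}\overline{c_{i}}(\phi(\lambda')\lambda'^{i})$ established in the proof of \Tref{solvingequations}. Expanding $(P+Q\lambda')\lambda'^{i}=P\lambda'^{i}+Q\lambda'^{i+1}$ (legitimate because $Q$ and $\lambda'$ generate an associative subalgebra), rewriting $\lambda'^{i}$ and $\lambda'^{i+1}$ in the basis $\{1,\lambda'\}$, and using that the recursion forces $\sum_{i}p_{i+1}c_{i}=-NQ$ and $\sum_{i}q_{i+1}c_{i}=P+TQ$, the sum collects into
\[
\Phi(\lambda') \;=\; \overline{P}P - N\,\overline{Q}Q + \overline{Q}(P\lambda') + (\overline{P}+T\overline{Q})(Q\lambda').
\]
The identities $\overline{x}x=\norm(x)$ and $\overline{x}(xy)=\norm(x)y$, together with the linearized Moufang identity $\overline{x}(yz)+\overline{y}(xz)=\tr(\overline{x}y)z$ (all valid in any alternative algebra), then collapse this to
\[
\Phi(\lambda') \;=\; \bigl(\norm(P)-N\norm(Q)\bigr) + \bigl(\tr(\overline{Q}P)+T\norm(Q)\bigr)\lambda'.
\]

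The rest is case analysis. If $\lambda\in F$, then $[\lambda]=\{\lambda\}$ and the same computation (using $\overline{\lambda}=\lambda$) shows $\Phi(\lambda)=\norm(\phi(\lambda))$, so anisotropy of $\norm$ on the division algebra $A$ forces $\phi(\lambda)=0$. If $\lambda\notin F$, the set $\{1,\lambda\}$ is $F$-linearly independent, so $\Phi(\lambda)=0$ is equivalent to the two scalar relations $\norm(P)=N\norm(Q)$ and $\tr(\overline{Q}P)=-T\norm(Q)$. When $Q=0$, the first relation yields $\norm(P)=0$, hence $P=0$ by anisotropy, and so $\phi(\lambda')=0$ for every $\lambda'\in[\lambda]$. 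When $Q\neq 0$, set $\lambda_{0}=-Q^{-1}P$: multiplicativity of $\norm$ combined with $\overline{Q}=\norm(Q)Q^{-1}$ gives $\norm(\lambda_{0})=N$ and $\tr(\lambda_{0})=T$, placing $\lambda_{0}\in[\lambda]$; and the identity $Q(Q^{-1}P)=P$ (which holds because $Q,Q^{-1},P$ lie inside the associative subalgebra generated by $Q$ and $P$) yields $\phi(\lambda_{0})=0$. Uniqueness of the root in $[\lambda]$ is immediate from $P+Q\lambda'=0\iff\lambda'=-Q^{-1}P$.

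The main obstacle I anticipate is the carefully ordered expansion of $\Phi(\lambda')$ in the second paragraph: non-associativity prevents ordinary polynomial manipulations, and one must invoke Artin's theorem, the Moufang-type identity $\overline{x}(xy)=\norm(x)y$, and its linearization in just the right sequence to avoid spurious associators. Once the clean expression for $\Phi(\lambda')$ above is in hand, the remainder of the argument reduces to standard division-algebra arithmetic.
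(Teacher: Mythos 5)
Your proposal is correct and takes essentially the same route as the paper: reduce $\phi$ on a fixed conjugacy class to a linear expression $P+Q\lambda'$ (the paper's $G(N,T)+E(N,T)\lambda'$), use the identity $\Phi(z)=\sum_i \overline{c_i}(\phi(z)z^i)$ to express $\Phi$ on the class through $\norm(P)$, $\norm(Q)$ and $\tr(\overline{Q}P)$, and then split into the cases $Q=0$ (whole class of roots) and $Q\neq 0$ (unique root $-Q^{-1}P$). The only minor variations are cosmetic: you eliminate $\lambda'^2$ once more to get two scalar conditions and check that $-Q^{-1}P$ lies in the class by computing its trace and norm directly, whereas the paper keeps the quadratic form $\norm(E)z^2+\tr(\overline{E}G)z+\norm(G)$ and argues that the candidate root satisfies the same quadratic as $z_0$.
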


\begin{proof}
\sloppy Every $z\in A$ with $\tr(z)=T$ and $\norm(z)=N$ satisfies $z^2-Tz+N=0$.
Therefore, by plugging in $z^2=Tz-N$ in $\phi(z)$, we obtain $\phi(z)=E(N,T) z+G(N,T)$ for some polynomials $E(N,T)$ and $G(N,T)$ in the central variables $N$ and $T$.
Write $e_i(N,T)$ and $g_i(N,T)$ for the polynomials satisfying $z^i=e_i(N,T) z+g_i(N,T)$.
Note that $e_i(N,T)$ and $g_i(N,T)$ are central for any $N,T \in F$, and therefore we can treat them as central elements in the computations.
Then $E(N,T)=\sum_{i=0}^n c_i e_i$ and $G(N,T)=\sum_{i=0}^n c_i g_i$.
Now, $\Phi(z)=\sum_{i=0}^n \overline{c_i} (\phi(z) z^i)=\sum_{i=0}^n \overline{c_i} ((E(N,T) z+G(N,T)) (e_i(N,T) z+g_i(N,T)))=\sum_{i=0}^n e_i(N,T) \overline{c_i} (E(N,T) z^2)+\sum_{i=0}^n e_i(N,T) \overline{c_i} (G(N,T) z)+\sum_{i=0}^n g_i(N,T) \overline{c_i} (E(N,T) z)+\sum_{i=0}^n g_i(N,T) \overline{c_i} G(N,T)=\overline{E(N,T)} E(N,T) z^2+\overline{E(N,T)} G(N,T) z+\overline{G(N,T)} E(N,T) z+\overline{G(N,T)} G(N,T)=\norm(E(N,T)) z^2+\tr(\overline{E(N,T)} G(N,T)) z+\norm(G(N,T))$.

Consider an element $z_0 \in R(\Phi)$ with norm $N_0$ and trace $T_0$.
If $E(N_0,T_0)=0$ then we have $0=\Phi(z_0)=\norm(G(N_0,T_0))$, and so $G(N_0,T_0)=0$. This means that the equality $E(N_0,T_0) \lambda+G(N_0,T_0)=0$ holds for all $\lambda \in [z_0]$, and hence the equality $\phi(\lambda)=0$ holds for all $\lambda$ in the conjugacy class of $z_0$. 

Suppose $E(N_0,T_0) \neq 0$.
Since an element $\lambda$ of trace $T_0$ and norm $N_0$ satisfies $\phi(\lambda)=E(N_0,T_0) \lambda+G(N_0,T_0)$,
it is a root of $\phi(z)$ if and only if it is the unique solution to the equation $E(N_0,T_0) \lambda+G(N_0,T_0)=0$.
What is left then in order to prove that $R(\phi) \cap [z_0]=\{\lambda\}$ is to show that the unique solution $\lambda$ to the equation $E(N_0,T_0) \lambda+G(N_0,T_0)=0$ has indeed trace $T_0$ and norm $N_0$,
and indeed, this $\lambda$ satisfies $\norm(E(N_0,T_0)) \lambda^2+\tr(\overline{E(N_0,T_0)} G(N_0,T_0)) \lambda+\norm(G(N_0,T_0))=\overline{E(N_0,T_0)} (E(N_0,T_0) \lambda+G(N_0,T_0)) \lambda+\overline{G(N_0,T_0)} (E(N_0,T_0) \lambda+G(N_0,T_0))=0$, which means that $\lambda$ satisfies the same quadratic characteristic equation over $F$ as $z_0$, and so $\lambda$ is in the same conjugacy class as $z_0$, i.e. has norm $N_0$ and trace $T_0$.
\end{proof}

The previous two theorems give a complete algorithm for finding all the roots of an octonion polnyomial:
\begin{algo}
\sloppy One needs first to solve the equation $\Phi(z)$ over the algebraic closure of $F$. Each root $z_0$ lives in a field extension $K$ of $F$.
For $z_0$ to be in the same conjugacy class as an element of $R(\phi)$, $K$ must be $F$-isomorphic to a subfield of $A$, and therefore $[K:F]$ is either 2 or 1.
If it is, then the conjugacy class of $z_0$ in $A$ is in $R(\Phi)$, and then either $E(N_0,T_0)=0$ and then the entire class of $[z_0]$ is in $R(\phi)$, or $-E(N_0,T_0)^{-1}G(N_0,T_0)$ is the unique representative of $[z_0]$ in $R(\phi)$ where $N_0$ and $T_0$ are the norm and trace of $z_0$.
\end{algo}

\begin{exmpl}
Consider the real octonion algebra $A=\mathbb{O}$ with generators $i,j,\ell$, and the polynomial $\phi(z)=i z^2+j z+\ell$. The companion polynomial is $\Phi(z)=z^4+z^2+1$, and it has roots in the conjugacy classes of $\{z \in A: \norm(z)=1, \tr(z)=1\}$ and $\{z \in A: \norm(z)=1, \tr(z)=-1\}$.
For $\norm(z)=1, \tr(z)=1$, we have $z^2=z-1$, and so the equation $\phi(z)=0$ reduces to $(i+j) z+\ell-i=0$, which means that $(i+j)^{-1}(i-\ell)=\frac{1}{2}(1+ij+i\ell+j\ell)$ is the unique representative of its conjugacy class in $R(\phi)$.
For $\norm(z)=1, \tr(z)=-1$, we have $z^2=-z-1$, and so the equation $\phi(z)=0$ reduces to $(-i+j) z+\ell-i=0$, which means that $(-i+j)^{-1}(i-\ell)=\frac{1}{2}(-1+ij-i\ell+j\ell)$ is the unique representative of its conjugacy class in $R(\phi)$.
\end{exmpl}

\section{The Companion Matrix and its Left Eigenvalues}

Suppose $A$ is an octonion division algebra.
Let $\phi(z)=z^n+c_{n-1} z^{n-1}+\dots+c_0$ be a monic standard polynomial with coefficients $c_0,\dots,c_{n-1}$ in $A$. We want to associate the roots of $\phi(z)$ with left and right eigenvalues of the companion matrix, given by
\begin{center}
$C_\phi=\left(\begin{matrix}
0 & 1 & 0 & \dots & 0\\
0 & 0 & 1 & & 0\\
\vdots & & & \ddots & \\
0 &0 & \dots & 0& 1\\
-c_0 & -c_1 & \dots &-c_{n-2} & -c_{n-1}\rlap{~.}
\end{matrix}\right)$
\end{center}
We define the $\gamma$-twist of $\phi(z)$ to be the polynomial
$$\phi_\gamma(z)=\gamma^{-1} z^n+(\gamma^{-1} c_{n-1}) z^{n-1}+\dots+(\gamma^{-1} c_1) z+\gamma^{-1} c_0.$$
A left (or right) eigenvalue of $C_\phi$ is an element $\lambda \in A$ which satisfies $C_\phi v=\lambda v$ ($C_\phi v=v \lambda$) for some nonzero column vector $v$ of length $n$ with entries in $A$.
Write $LEV(C_\phi)$ and $REV(C_\phi)$ for the sets of left and right eigenvalues of $C_\phi$. 

\begin{thm}\label{Comp}
For any standard polynomial $\phi(z)$ over $A$, $LEV(C_\phi)=\bigcup_{\gamma \in A^\times} R(\phi_\gamma)$.  
\end{thm}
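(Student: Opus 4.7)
My plan is to show both inclusions simultaneously by reducing each condition to the same scalar equation, using one application of the Moufang identity to bridge them.

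First I would unpack the left eigenvalue condition. Writing $v = (v_1, \dots, v_n)^T$, the equation $C_\phi v = \lambda v$ forces $v_{k+1} = \lambda v_k$ for $k < n$ together with $-\sum_{i=0}^{n-1} c_i v_{i+1} = \lambda v_n$. By Artin's theorem the subalgebra of $A$ generated by $\lambda$ and $v_1$ is associative, so by induction $v_k = \lambda^{k-1} v_1$; in particular $v \neq 0$ forces $v_1 \neq 0$. Substituting into the last row, $\lambda \in LEV(C_\phi)$ becomes equivalent to the existence of some $\gamma := v_1 \in A^\times$ satisfying
\[
\lambda^n \gamma + \sum_{i=0}^{n-1} c_i(\lambda^i \gamma) = 0. \qquad (\star)
\]

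Next I would match $(\star)$ to $\phi_\gamma(\lambda) = 0$ by one application of the Moufang identity $(xy)(zx) = x(yz)x$, valid in any alternative algebra. Taking $x = \gamma^{-1}$, $y = c_i$, $z = \lambda^i \gamma$, the cancellation $(\lambda^i \gamma)\gamma^{-1} = \lambda^i$ (legitimate inside the associative subalgebra $\langle \lambda, \gamma\rangle$ furnished by Artin's theorem) collapses the left-hand side of Moufang to $(\gamma^{-1} c_i)\lambda^i$, while the right-hand side is the flexible sandwich $\gamma^{-1}(c_i(\lambda^i \gamma))\gamma^{-1}$. Summing over $i \in \{0, 1, \dots, n\}$ (with the convention $c_n = 1$) and factoring the outer $\gamma^{-1}$'s out of the sum via distributivity and flexibility, I obtain
\[
\phi_\gamma(\lambda) = \gamma^{-1}\!\left(\lambda^n \gamma + \sum_{i=0}^{n-1} c_i(\lambda^i \gamma)\right)\!\gamma^{-1}.
\]
Since $A$ is a division algebra and $\gamma \in A^\times$, this sandwich vanishes if and only if its middle factor does, which is exactly $(\star)$. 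Combined with the first step, this yields $LEV(C_\phi) = \bigcup_{\gamma \in A^\times} R(\phi_\gamma)$.

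The main obstacle I anticipate is the bookkeeping in the Moufang step: one must verify that the triple $(\gamma^{-1}, c_i, \lambda^i \gamma)$ genuinely satisfies Moufang (from alternativity of $A$), that the cancellation $(\lambda^i\gamma)\gamma^{-1} = \lambda^i$ is legitimate (via Artin applied to the pair $\lambda, \gamma$, which carries $\gamma^{-1}$ into the same associative subalgebra), and that the outer $\gamma^{-1}$ factors can be pulled out of the sum by flexibility $a(ba) = (ab)a$. Once this sandwich identity is in hand, the absence of zero divisors in $A$ closes the proof without further input.
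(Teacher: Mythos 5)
Your proof is correct and follows essentially the same route as the paper: unpack the eigenvector equation to get the scalar relation $\lambda^n\gamma+\sum_i c_i(\lambda^i\gamma)=0$, then use the Moufang identity $(xy)(zx)=x(yz)x$ to rewrite it as a sandwich of $\phi_\gamma(\lambda)$, and invoke the absence of zero divisors. The only (cosmetic) difference is that you sandwich with $\gamma^{-1}$ to exhibit $\phi_\gamma(\lambda)=\gamma^{-1}(\cdot)\gamma^{-1}$, whereas the paper writes $c_i=\gamma(\gamma^{-1}c_i)$ and sandwiches with $\gamma$ to get $\gamma\,\phi_\gamma(\lambda)\,\gamma=0$.
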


\begin{proof}
The element $\lambda \in A$ is a left eigenvalue of $C_\phi$ if and only if there exists a nonzero vector 
$$v=\left(\begin{array}{r}
v_1\\
\vdots\\
v_n
\end{array}\right) \in A^n
$$ satisfying $C_\phi v=\lambda v$.
This equality is equivalent to the system
\begin{eqnarray*}
v_2 & = & \lambda v_1\\
\vdots\\
v_n & = & \lambda v_{n-1}\\
-c_0 v_1-\dots-c_{n-1} v_n & = & \lambda v_n.
\end{eqnarray*}
Note that since $v\ne0$, $v_1\ne0$.
The first $n-1$ equations mean that $v$ is $\left(\begin{array}{r} 1 \\ \lambda \\ \vdots\\ \lambda^{n-1} \end{array}\right)v_1$ and the last equation then becomes 
$$c_0 v_1+c_1 (\lambda v_1)+\dots+c_{n-1} (\lambda^{n-1} v_1)+\lambda^n v_1=0.$$
Write $\gamma=v_1$.
Note that if $\gamma=0$ then $v$ is the zero vector, so we have $\gamma \neq 0$.
For each $i\in \{0,\dots,n-1\}$, write $c_i'=\gamma^{-1} c_i$, and so the equation becomes
$$\gamma c_0' \gamma+(\gamma c_1') (\lambda \gamma)+\dots+(\gamma c_{n-1}') (\lambda^{n-1} \gamma)+\lambda^n \gamma=0.$$
By the Moufang identity $(xy)(zx)=x(yz)x$, the former equation becomes
$$\gamma (c_0'+c_1'\lambda+\dots+c_{n-1}'\lambda^{n-1} +\gamma^{-1} \lambda^n) \gamma=0.$$
Therefore, $\lambda$ is a root of the twisted polynomial $\phi_\gamma(z)$.
In the opposite direction, it is clear from the same computation that a root $\lambda$ of $\phi_\gamma(z)$ is in $LEV(C_\phi)$.
\end{proof}

\begin{thm}
Let $\phi(z)$ be a standard monic polynomial over an octonion algebra $A$ over a field $F$, and let $E(N,T)$ and $G(N,T)$ be as in Theorem \ref{classes}.
Then
\begin{enumerate}
\item $R(\phi) \subseteq LEV(C_\phi) \subseteq R(\Phi)$.
\item For every conjugacy class $[z_0] \in R(\Phi)$ of norm $N_0$ and trace $T_0$, if $E(N_0,T_0)=0$ then $[z_0] \subseteq LEV(C_\phi)$. Otherwise, \\$[z_0] \cap LEV(C_\phi)=\{-(E(N_0,T_0)^{-1} \gamma)(\gamma^{-1} G(N_0,T_0)) : \gamma \in A^\times\}$.
\end{enumerate}
\end{thm}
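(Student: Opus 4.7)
The plan is to apply Theorem \ref{Comp}, which writes $LEV(C_\phi) = \bigcup_{\gamma \in A^\times} R(\phi_\gamma)$, and then analyse each $R(\phi_\gamma)$ via Theorems \ref{solvingequations} and \ref{classes}. The crux is a direct bookkeeping calculation comparing the companion polynomial and the reduction polynomials $E, G$ of $\phi$ with those of its twists $\phi_\gamma$.

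For part (1), the inclusion $R(\phi) \subseteq LEV(C_\phi)$ is immediate from Theorem \ref{Comp} at $\gamma=1$, since $\phi_1 = \phi$. For the other inclusion I would show that the companion polynomial $\Phi_\gamma$ of $\phi_\gamma$ satisfies
$$\Phi_\gamma(z) \;=\; \norm(\gamma^{-1})\,\Phi(z).$$
Comparing coefficients, this reduces to the standard fact that on a composition algebra the quadratic norm is a similitude under left multiplication, $\norm(\gamma^{-1}x)=\norm(\gamma^{-1})\norm(x)$, together with the polarized statement $\tr(\overline{\gamma^{-1}x}(\gamma^{-1}y)) = \norm(\gamma^{-1})\tr(\bar{x}y)$. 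Since $\norm(\gamma^{-1})\in F^\times$, we get $R(\Phi_\gamma)=R(\Phi)$, and then Theorem \ref{solvingequations} gives $R(\phi_\gamma) \subseteq R(\Phi)$ for every $\gamma$; hence $LEV(C_\phi) \subseteq R(\Phi)$.

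For part (2) I would apply Theorem \ref{classes} to each $\phi_\gamma$. Using the central reduction $z^i = e_i(N,T)z + g_i(N,T)$ and the fact that $e_i,g_i\in F$ lie in the nucleus of $A$ (so they associate and commute with everything), the substitution $c_i \mapsto \gamma^{-1}c_i$ yields $E^\gamma(N,T)=\gamma^{-1}E(N,T)$ and $G^\gamma(N,T)=\gamma^{-1}G(N,T)$. In particular $E^\gamma(N_0,T_0)=0$ exactly when $E(N_0,T_0)=0$; in that case Theorem \ref{classes} places the entire class $[z_0]$ inside every $R(\phi_\gamma)$ and therefore inside $LEV(C_\phi)$, giving the first subcase. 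When $E(N_0,T_0)\neq 0$, Theorem \ref{classes} identifies the unique element of $[z_0]\cap R(\phi_\gamma)$ as $-(\gamma^{-1}E(N_0,T_0))^{-1}(\gamma^{-1}G(N_0,T_0))$; by alternativity $\gamma$ and $E(N_0,T_0)$ generate an associative subalgebra, so $(\gamma^{-1}E(N_0,T_0))^{-1}=E(N_0,T_0)^{-1}\gamma$, producing exactly the stated parametrization. Taking the union over $\gamma\in A^\times$ then recovers $[z_0]\cap LEV(C_\phi)$.

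The only genuine obstacle is tracking parenthesisation in the non-associative setting. Every three-fold product appearing in the argument is handled either by the fact that $F$ lies in the nucleus of $A$ (used throughout the $E^\gamma, G^\gamma$ computation and in the polarization identity) or by the alternative property applied to a pair of octonion elements (used to invert $\gamma^{-1}E(N_0,T_0)$). With those justifications in place, both parts of the theorem reduce mechanically to Theorem \ref{Comp} combined with Theorems \ref{solvingequations} and \ref{classes} applied to the family $\{\phi_\gamma\}_{\gamma\in A^\times}$.
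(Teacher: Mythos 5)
Your proposal is correct and follows essentially the same route as the paper: both reduce the statement to Theorem \ref{Comp}, observe that each twist $\phi_\gamma$ has companion polynomial $\norm(\gamma^{-1})\Phi(z)$ (hence the same roots), compute $\phi_\gamma(z)=(\gamma^{-1}E(N,T))z+(\gamma^{-1}G(N,T))$, and convert $-(\gamma^{-1}E)^{-1}(\gamma^{-1}G)$ into $-(E^{-1}\gamma)(\gamma^{-1}G)$ via alternativity. Your write-up merely makes explicit the polarization identity and the Artin-subalgebra justification that the paper leaves implicit.
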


\begin{proof}
The inclusion $R(\phi) \subseteq LEV(C_\phi)$ is obvious.
For $LEV(C_\phi) \subseteq R(\Phi)$, it is enough to notice that the twists $\phi_\gamma(z)$ have the same companion polynomial as $\phi(z)$ up to division by the norm of $\gamma$ (using the multiplicativity of the norm form and \cite[Equations (1.3) \& (1.4), Section 1.2]{SpringerVeldkamp}).

For each $\gamma$, the polynomial $\phi_\gamma(z)$ satisfies $\phi_\gamma(z)=(\gamma^{-1} E(N,T))z+(\gamma^{-1} G(N,T))$ by a straight-forward computation.
Consider a given class $[z_0]$ in $\Phi(z)$ of norm $N_0$ and trace $T_0$.
If $E(N_0,T_0)=0$ then $[z_0] \subseteq R(\phi)$, and hence $[z_0] \subseteq LEV(C_\phi)$.
Suppose $E(N_0,T_0) \neq 0$. Then the unique element of $R(\phi_\gamma) \cap [z_0]$ is \\$-(\gamma^{-1} E(N_0,T_0))^{-1}(\gamma^{-1} G(N_0,T_0))=-(E(N_0,T_0)^{-1} \gamma)(\gamma^{-1} G(N_0,T_0))$.
\end{proof}

Note that unlike the case of quaternion algebras, there is no inclusion $LEV(C_\phi) \subseteq R(\phi)$, not even in the case of quadratic polynomials.

\begin{exmpl}\label{LEVnotRoots}
Consider the polynomial $\phi(z)=z^2+iz+1+ij$.
The element $\lambda=j$ is not a root of this polynomial.
However, $\lambda=j$ is a root of the twisted polynomial
$\phi_\ell(z)$, and so it belongs to $LEV(C_\phi)$.
\end{exmpl}

Note that in this example, $j$ belongs to the quaternion subalgebra generated by the coefficients, which means that even in the case where all the coefficients belong to the same quaternion subalgebra $Q$, there is no guarantee that $LEV(C_\phi) \cap Q=R(\phi) \cap Q$.
The following proposition describes the set $LEV(C_\phi) \cap Q$ in such cases:

\begin{prop}
Given a standard monic polynomial $\phi(z)=z^n+c_{n-1} z^{n-1}+\dots+c_1 z+c_0$ over a division octonion algebra $A$ whose coefficients belong to a quaternion subalgebra $Q$ of $A$, we have $LEV(C_\phi) \cap Q=(R(\phi)\cup R(\tilde{\phi})) \cap Q$.
\end{prop}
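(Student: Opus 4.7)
The plan is to invoke the characterization from \Tref{Comp} and then decompose everything with respect to the direct-sum structure $A = Q \oplus Q\ell$. The starting point is that, by the computation inside the proof of \Tref{Comp}, an element $\lambda \in A$ is a left eigenvalue of $C_\phi$ precisely when there exists a nonzero $\gamma \in A$ satisfying
$$c_0\gamma + c_1(\lambda\gamma) + \dots + c_{n-1}(\lambda^{n-1}\gamma) + \lambda^n\gamma \;=\; 0.$$
Assuming $\lambda \in Q$, I would write $\gamma = p + q\ell$ with $p,q \in Q$ and expand each summand using the octonion multiplication table given in Section~2.

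The key computation will be that, because both $c_i$ and $\lambda^i$ lie in the associative subalgebra $Q$, a single application of the multiplication rule produces
$$c_i\bigl(\lambda^i\gamma\bigr) \;=\; c_i\lambda^i p \;+\; \bigl(q\lambda^i c_i\bigr)\ell,$$
with every product appearing on the right taking place entirely inside $Q$ and therefore being unambiguous. Summing over $i=0,\dots,n$ (with $c_n = 1$) and collecting $Q$- and $Q\ell$-components separately should collapse the $A$-equation into the pair of $Q$-equations
$$\phi(\lambda)\,p \;=\; 0 \qquad \text{and} \qquad q\,\tilde{\phi}(\lambda) \;=\; 0.$$
Since $Q$ is a quaternion subalgebra of an octonion division algebra, it is itself a division algebra, so each of these equations forces either the polynomial value or the quaternion multiplier to vanish.

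Because $(p,q) \neq (0,0)$, at least one of $\phi(\lambda) = 0$ or $\tilde{\phi}(\lambda) = 0$ must occur, yielding $LEV(C_\phi) \cap Q \subseteq (R(\phi) \cup R(\tilde{\phi})) \cap Q$. For the reverse inclusion, I would exhibit two explicit witnesses: $\gamma = 1$ makes the eigenvalue equation reduce to $\phi(\lambda)=0$, handling $R(\phi) \cap Q$, while $\gamma = \ell$ makes it reduce to $\tilde{\phi}(\lambda)\ell = 0$, handling $R(\tilde{\phi}) \cap Q$. The main obstacle to watch out for is the non-associativity of $A$: the argument only succeeds because after one expansion via the multiplication rule every remaining product sits inside the associative subalgebra $Q$, so $\phi(\lambda)$ and $\tilde{\phi}(\lambda)$ emerge cleanly without hidden associators. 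If $c_i$ or $\lambda$ were allowed to stray outside $Q$, this clean split between the $Q$- and $Q\ell$-components would break down and the statement would fail (as illustrated by \Rref{LEVnotRoots}, where $j \in Q$ but the relevant coefficient $1 + ij$ does not lie in the quaternion subalgebra generated by $j$ alone — though here all coefficients do sit in a common $Q$, making the dichotomy work).
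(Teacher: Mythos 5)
Your proposal is correct and follows essentially the same route as the paper: both start from the eigenvalue condition $c_0\gamma+c_1(\lambda\gamma)+\dots+c_{n-1}(\lambda^{n-1}\gamma)+\lambda^n\gamma=0$ from \Tref{Comp}, decompose $\gamma$ along $A=Q\oplus Q\ell$, and use the multiplication rule to split the equation into the $Q$-component $\phi(\lambda)\gamma_0=0$ and the $Q\ell$-component $(\gamma_1\tilde{\phi}(\lambda))\ell=0$, concluding via the absence of zero divisors. Your explicit witnesses $\gamma=1$ and $\gamma=\ell$ for the reverse inclusion simply spell out what the paper dismisses as ``the same computation,'' so there is no substantive difference.
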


\begin{proof}
Every left eigenvalue $\lambda$ of $C_\phi$ satisfies
$$c_0 \gamma+c_1 (\lambda \gamma)+\dots+c_{n-1} (\lambda^{n-1} \gamma)+\lambda^n \gamma=0$$
for some $\gamma \in A^\times$.
Suppose all the coefficients belong to a quaternion subalgebra $Q$, and suppose $\lambda \in Q$ as well.
Then $A$ decomposes as $A=Q\oplus Q\ell$.
The element $\gamma$ decomposes accordingly as $\gamma=\gamma_0+\gamma_1 \ell$.
By a straight-forward computation, we obtain
$c_0 \gamma+c_1 (\lambda \gamma)+\dots+c_{n-1} (\lambda^{n-1} \gamma)+\lambda^n \gamma=(c_0+c_1 \lambda+\dots+c_{n-1} \lambda^{n-1}) \gamma_0+(\gamma_1 (c_0+\lambda c_1+\dots+\lambda^{n-1} c_{n-1}+\lambda^n)) \ell.$
Therefore, 
$$(\gamma_0=0 \vee c_0+c_1 \lambda+\dots+c_{n-1} \lambda^{n-1}=0)\wedge(\gamma_1=0 \vee  c_0+\lambda c_1+\dots+\lambda^{n-1} c_{n-1}+\lambda^n=0).$$
Consequently, $LEV(C_\phi) \cap Q \subseteq (R(\phi)\cup R(\tilde{\phi})) \cap Q$. The inclusion in the opposite direction is proven using the same computation.
\end{proof}
\section{Right Eigenvalues of the Companion Matrix}

Given a polynomial $\phi(z)=z^n+c_{n-1} z^{n-1}+\dots+c_1 z+c_0$, let $\phi^\gamma(z)$ denote the two-sided twisted polynomial $\phi^\gamma(z)=\gamma^{-2} z^n+(\gamma^{-1} c_{n-1} \gamma^{-1}) z^{n-1}+\dots+(\gamma^{-1} c_1 \gamma^{-1}) z+\gamma^{-1} c_0 \gamma^{-1}$.
\begin{thm}\label{REVT}
The set $REV(C_\phi)$ is the union of $R(\phi^\gamma)$ for all $\gamma \in A^\times$.
\end{thm}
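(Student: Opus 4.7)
The plan is to mirror the proof of \Tref{Comp}, replacing left multiplication by $\lambda$ with right multiplication and invoking the left Moufang identity in place of the middle one. First I would translate the condition $C_\phi v = v\lambda$ into a system of $n$ equations: the top $n-1$ rows give $v_{i+1}=v_i\lambda$ for $1\le i\le n-1$, while the last row reads $-c_0 v_1-c_1 v_2-\dots-c_{n-1}v_n=v_n\lambda$. By Artin's theorem the subalgebra of $A$ generated by $v_1$ and $\lambda$ is associative, so the recursion forces $v_i=v_1\lambda^{i-1}$ unambiguously; since $v\ne 0$, we must have $v_1\in A^\times$, and I would then set $\gamma=v_1^{-1}$.

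Substituting these expressions back rewrites the last row of the eigenvalue condition as
\[
c_0 v_1 + c_1(v_1 \lambda) + \dots + c_{n-1}(v_1 \lambda^{n-1}) + v_1 \lambda^n = 0.
\]
The crux is to identify this with $\phi^\gamma(\lambda)=0$ by applying the left Moufang identity $x(y(xz))=(xyx)z$ term-by-term with $x=v_1$, $y=c_i$, and $z=\lambda^i$. This yields $v_1(c_i(v_1 \lambda^i))=(v_1 c_i v_1)\lambda^i=(\gamma^{-1} c_i \gamma^{-1})\lambda^i$, and the same identity with $y=1$ gives $v_1(v_1\lambda^n)=\gamma^{-2}\lambda^n$. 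Summing and using distributivity, the result of left multiplying the displayed equation above by $v_1$ is exactly $\phi^\gamma(\lambda)$. Since $A$ is a division algebra and $v_1\ne 0$, left multiplication by $v_1$ is injective, so the two equations are equivalent.

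This gives both inclusions at once. A right eigenvector with first entry $v_1$ produces $\gamma=v_1^{-1}$ with $\lambda\in R(\phi^\gamma)$, and conversely a root $\lambda$ of $\phi^\gamma$ furnishes the vector $(\gamma^{-1},\gamma^{-1}\lambda,\dots,\gamma^{-1}\lambda^{n-1})^{T}$ witnessing $\lambda$ as a right eigenvalue of $C_\phi$. I expect the only real care required is in choosing the correct Moufang identity and keeping the parenthesizations straight; beyond alternativity and Artin's theorem, no additional structural facts about octonions enter.
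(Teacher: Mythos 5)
Your proposal is correct and follows essentially the same route as the paper: reduce the eigenvector condition to $c_0 v_1+c_1(v_1\lambda)+\dots+c_{n-1}(v_1\lambda^{n-1})+v_1\lambda^n=0$ with $\gamma=v_1^{-1}$, then convert it to $\phi^\gamma(\lambda)=0$ via the Moufang identity $x(y(xz))=(xyx)z$. Your explicit left multiplication by $v_1=\gamma^{-1}$, with equivalence justified by injectivity in a division algebra, is exactly the paper's step (stated there as multiplying by $\gamma^{-1}$), so nothing further is needed.
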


\begin{proof}
The element $\lambda \in A$ is a right eigenvalue of $C_\phi$ if and only if there exists a nonzero vector 
$$v=\left(\begin{array}{r}
v_1\\
\vdots\\
v_n
\end{array}\right) \in A^n
$$ satisfying $C_\phi v=v \lambda$.
This equality is equivalent to the system
\begin{eqnarray*}
v_2 & = & v_1 \lambda\\
\vdots\\
v_n & = & v_{n-1} \lambda\\
-c_0 v_1-\dots-c_{n-1} v_n & = & v_n\lambda.
\end{eqnarray*}
Note that since $v\ne0$, $v_1\ne0$.
The first $n-1$ equations mean that $v$ is $v_1 \left(\begin{array}{r} 1 \\ \lambda \\ \vdots\\ \lambda^{n-1} \end{array}\right)$ and the last equation then becomes 
$$c_0 v_1+c_1 (v_1\lambda)+\dots+c_{n-1} (v_1\lambda^{n-1})+v_1\lambda^n=0.$$
Note that if $v_1=0$ then $v$ is the zero vector, so we have $v_1 \neq 0$. Write $\gamma=v_1^{-1}$.
Multiply the equation from the right by $\gamma^{-1}$ and use the Moufang identity $x(y(xz))=(xyx)z$ to get
$$\gamma^{-1} c_0 \gamma^{-1}+(\gamma^{-1} c_1 \gamma^{-1}) \lambda+\dots+(\gamma^{-1} c_{n-1} \gamma^{-1}) \lambda^{n-1}+\gamma^{-2} \lambda^n=0.$$
Therefore, $\lambda$ is a root of the twisted polynomial $\phi^\gamma(z)$.
In the opposite direction, it is clear from the same computation that a root $\lambda$ of $\phi^\gamma(z)$ is in $REV(C_\phi)$.
\end{proof}

\begin{thm}\label{REV}
Let $\phi(z)$ be a standard monic polynomial over an octonion algebra $A$ over a field $F$, and let $E(N,T)$ and $G(N,T)$ be as in Theorem \ref{classes}.
Then
\begin{enumerate}
\item $R(\phi) \subseteq REV(C_\phi) \subseteq R(\Phi)$.
\item \sloppy For every conjugacy class $[z_0] \in R(\Phi(z))$ of norm $N_0$ and trace $T_0$, if $E(N_0,T_0)=0$ then $[z_0] \subseteq REV(C_\phi)$. Otherwise, $[z_0] \cap REV(C_\phi)=\{-\gamma \big(E(N_0,T_0)^{-1} (G(N_0,T_0) \gamma^{-1})\big) : \gamma \in A^\times\}$.
\end{enumerate}
\end{thm}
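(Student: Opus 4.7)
The plan is to mirror the structure of the preceding theorem on $LEV(C_\phi)$, substituting the two-sided twist $\phi^\gamma$ for the one-sided twist $\phi_\gamma$ and invoking the corresponding Moufang identities. For Part~(1), the inclusion $R(\phi)\subseteq REV(C_\phi)$ will follow immediately from \Tref{REVT} by taking $\gamma=1$. For $REV(C_\phi)\subseteq R(\Phi)$, by \Tref{REVT} it will suffice to show $R(\phi^\gamma)\subseteq R(\Phi)$ for every $\gamma\in A^\times$, and this in turn will follow once I verify that the companion polynomial of $\phi^\gamma$ agrees with $\Phi(z)$ up to a nonzero central scalar -- namely a power of $\norm(\gamma)$. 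This verification will rest on the multiplicativity of the norm and on the Springer--Veldkamp identities (already used in the $LEV$ case) expressing $\norm(\gamma^{-1}c_i\gamma^{-1})$ and $\tr(\overline{\gamma^{-1}c_i\gamma^{-1}}(\gamma^{-1}c_j\gamma^{-1}))$ in terms of $\norm(c_i)$, $\tr(\overline{c_i}c_j)$ and $\norm(\gamma)$.

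For Part~(2), I would fix a class $[z_0]\subseteq R(\Phi)$ of norm $N_0$ and trace $T_0$, so that every $\lambda\in[z_0]$ satisfies $\lambda^2=T_0\lambda-N_0$ and hence $\lambda^i=e_i\lambda+g_i$ with $e_i,g_i\in F$, in the notation of \Tref{classes}. Since $F$ lies in the nucleus, I can pull each central scalar $e_i,g_i$ through the otherwise non-associative products in $\phi^\gamma(\lambda)=\sum_{i=0}^{n}(\gamma^{-1}c_i\gamma^{-1})\lambda^i$ (with $c_n=1$) and collect to obtain
$$\phi^\gamma(\lambda)=\bigl(\gamma^{-1}E(N_0,T_0)\gamma^{-1}\bigr)\lambda+\gamma^{-1}G(N_0,T_0)\gamma^{-1}.$$
If $E(N_0,T_0)=0$ then \Tref{classes} gives $[z_0]\subseteq R(\phi)\subseteq REV(C_\phi)$. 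Otherwise, $R(\phi^\gamma)\cap[z_0]$ is the single element $\lambda=-(\gamma^{-1}E(N_0,T_0)\gamma^{-1})^{-1}(\gamma^{-1}G(N_0,T_0)\gamma^{-1})$.

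The concluding step will be to rewrite this $\lambda$ in the asymmetric form appearing in the theorem. I will first use the identity $(xyx)^{-1}=x^{-1}y^{-1}x^{-1}$, valid in alternative algebras for invertible $x,y$, to get $\lambda=-(\gamma E(N_0,T_0)^{-1}\gamma)(\gamma^{-1}G(N_0,T_0)\gamma^{-1})$; then apply the Moufang identity $(xyx)z=x(y(xz))$ with $x=\gamma$, $y=E(N_0,T_0)^{-1}$ and $z=\gamma^{-1}G(N_0,T_0)\gamma^{-1}$; and finally collapse $\gamma(\gamma^{-1}G(N_0,T_0)\gamma^{-1})=G(N_0,T_0)\gamma^{-1}$ inside the two-generated, hence associative (by Artin's theorem), subalgebra $\sg{\gamma,G(N_0,T_0)}$. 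This will produce exactly $-\gamma\bigl(E(N_0,T_0)^{-1}(G(N_0,T_0)\gamma^{-1})\bigr)$, and letting $\gamma$ range over $A^\times$ and combining with \Tref{REVT} will yield the stated description of $[z_0]\cap REV(C_\phi)$.

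I expect the main obstacle to be the Moufang/Artin manipulations in the last paragraph. Because the twist is two-sided, the naive quotient carries $\gamma^{-1}$ on both sides of each of $E(N_0,T_0)$ and $G(N_0,T_0)$, and the reduction steps must be sequenced in the right order to clear the stray $\gamma^{-1}$ sandwiched between the two halves without ever evaluating an ambiguous triple product outside a two-generated subalgebra. Part~(1) is comparatively routine, though it still requires careful tracking of how the two-sided twist by $\gamma^{-1}$ acts on both the norm and the trace bilinear form.
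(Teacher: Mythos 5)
Your proposal follows essentially the same route as the paper: deduce both parts from \Tref{REVT}, note that $\phi^\gamma$ has companion polynomial proportional to $\Phi$ by a central factor (a power of $\norm(\gamma)$), linearize $\phi^\gamma$ on the class $[z_0]$ to $(\gamma^{-1}E\gamma^{-1})z+\gamma^{-1}G\gamma^{-1}$, and convert the resulting root to $-\gamma\bigl(E(N_0,T_0)^{-1}(G(N_0,T_0)\gamma^{-1})\bigr)$ via the Moufang identity $(xyx)z=x(y(xz))$. Your extra steps (the inverse formula $(xyx)^{-1}=x^{-1}y^{-1}x^{-1}$ and the Artin-theorem collapse of $\gamma(\gamma^{-1}G\gamma^{-1})$) merely spell out what the paper leaves implicit, so the argument is correct and matches the published proof.
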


\begin{proof}
The inclusion $R(\phi) \subseteq REV(C_\phi)$ is obvious.
For $REV(C_\phi) \subseteq R(\Phi)$, it is enough to notice that the twists $\phi^\gamma(z)$ have the same companion polynomial as $\phi(z)$ up to division by the norm of $\gamma^2$ (using the multiplicativity of the norm form and \cite[Equations (1.3) \& (1.4), Section 1.2]{SpringerVeldkamp}).

\sloppy For each $\gamma$, the polynomial $\phi^\gamma(z)$ satisfies $\phi^\gamma(z)=(\gamma^{-1} E(N,T) \gamma^{-1})z+(\gamma^{-1} G(N,T) \gamma^{-1})$ by a straight-forward computation.
Consider a given class $[z_0]$ in $R(\Phi)$ of norm $N_0$ and trace $T_0$.
If $E(N_0,T_0)=0$ then $[z_0] \subseteq R(\phi)$, and hence $[z_0] \subseteq REV(C_\phi)$.
Suppose $E(N_0,T_0) \neq 0$. Then the unique element of $R(\phi_\gamma) \cap [z_0]$ is $-(\gamma^{-1} E(N_0,T_0) \gamma^{-1})^{-1}(\gamma^{-1} G(N_0,T_0) \gamma^{-1})$.
By the Moufang identity $(xyx)z=x(y(xz))$ we obtain $-(\gamma^{-1} E(N_0,T_0) \gamma^{-1})^{-1}(\gamma^{-1} G(N_0,T_0) \gamma^{-1})=-\gamma \big(E(N_0,T_0)^{-1} (G(N_0,T_0) \gamma^{-1}))$.
\end{proof}

\begin{rem}
Given an octonion algebra $A$ over a field $F$, if $g,h \in A$ are conjugates (i.e., have the same trace and norm), then $g=\delta h \delta^{-1}$ for some $\delta \in A^\times$ of $\tr(\delta)=0$.
\end{rem}

\begin{proof}
If $g \neq \overline{h}$, take $\delta=g-\overline{h}$.
If $g=\overline{h}$, take $\delta$ to be any element in $1^{\perp} \cap g^{\perp}$.
\end{proof}

\begin{thm}\label{conjugacy}
Let $A$ be an octonion division algebra over a field $F$, and let $e$ and $g$ be nonzero elements in $A$.
Then $\{\gamma (e (g \gamma^{-1})) : \gamma \in A^\times\}=\{\delta (eg) \delta^{-1} : \delta \in A^\times\}$.
\end{thm}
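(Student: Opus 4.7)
The plan is to show that both sides equal the conjugacy class $[eg]=\{x\in A:\tr(x)=\tr(eg),\ \norm(x)=\norm(eg)\}$. By the preceding remark, $\{\delta(eg)\delta^{-1}:\delta\in A^\times\}$ is precisely $[eg]$, so it suffices to prove the equality $\{\gamma(e(g\gamma^{-1})):\gamma\in A^\times\}=[eg]$.

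For the inclusion $\{\gamma(e(g\gamma^{-1}))\}\subseteq[eg]$, I would verify that each such element has the same trace and norm as $eg$. The norm is immediate from multiplicativity of the norm form. For the trace, I would apply the identity $\tr(xy)=\tr(yx)$ (which holds in any octonion algebra as a routine consequence of $x^2-\tr(x)x+\norm(x)=0$ and the polar form) to obtain $\tr(\gamma(e(g\gamma^{-1})))=\tr((e(g\gamma^{-1}))\gamma)$. Since $g$ and $\gamma$ generate an associative subalgebra by Artin's theorem, $(g\gamma^{-1})\gamma=g$, whence $(e(g\gamma^{-1}))\gamma=eg+[e,g\gamma^{-1},\gamma]$. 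Associators in an octonion algebra are trace-zero: by skew-symmetry and vanishing on central summands one checks that $\overline{[x,y,z]}=-[x,y,z]$ in characteristic $\neq 2$ and $\overline{[x,y,z]}=[x,y,z]$ in characteristic $2$, giving $\tr([x,y,z])=0$ in either case. Hence $\tr(\gamma(e(g\gamma^{-1})))=\tr(eg)$.

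The reverse inclusion $[eg]\subseteq\{\gamma(e(g\gamma^{-1}))\}$ is the main obstacle. Given $y\in[eg]$, I need to exhibit $\gamma\in A^\times$ with $\gamma(e(g\gamma^{-1}))=y$. Setting $\xi=\gamma^{-1}$ and repeatedly using the invertibility of left multiplication by $\gamma$ and by $e$ in the alternative algebra $A$ (i.e.\ $\gamma^{-1}(\gamma z)=z$ and $e^{-1}(ez)=z$), the equation transforms into the linear condition $e(g\xi)=\xi y$ in $\xi\in A$; since $A$ is a division algebra, any nonzero solution $\xi$ is automatically invertible and yields a valid $\gamma=\xi^{-1}$. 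The task thus reduces to showing that the $F$-linear operator $M_y\colon\xi\mapsto e(g\xi)-\xi y$ on $A$ has nontrivial kernel for every $y\in[eg]$. At $y=eg$ one has $M_{eg}(1)=0$, so $\det M_{eg}=0$; the key step is to argue that $\det M_y$ depends only on $\tr(y)$ and $\norm(y)$, hence vanishes throughout the class.

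An alternative route uses the left Moufang identity $x(y(xz))=(xyx)z$ to rewrite $\gamma(e(g\gamma^{-1}))=(\gamma e\gamma)(\gamma^{-1}g\gamma^{-1})$, and then establishes global surjectivity of the map $\gamma\mapsto(\gamma e\gamma)(\gamma^{-1}g\gamma^{-1})$ onto $[eg]$ via a tangent-space computation at $\gamma=1$ together with the homogeneity of the conjugacy class; the derivative at $\gamma=1$ in direction $h$ is $(he+eh)g-e(hg+gh)$, and one checks that this surjects onto the $6$-dimensional tangent space $\{v:\tr(v)=0,\ \tr(v\,\overline{eg})=0\}$ of $[eg]$ at $eg$.
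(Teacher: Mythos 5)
Your forward inclusion is fine: showing $\tr\bigl(\gamma (e (g \gamma^{-1}))\bigr)=\tr(eg)$ via $\tr(xy)=\tr(yx)$, Artin, and the fact that associators have zero trace is a valid alternative to the paper's computation with the polarized norm form and the identity $\langle xy,z\rangle=\langle y,\overline{x}z\rangle=\langle x,z\overline{y}\rangle$; together with multiplicativity of the norm this gives $\{\gamma (e (g \gamma^{-1}))\}\subseteq[eg]$, and the identification of the right-hand side with $[eg]$ via the preceding remark is also correct.

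The reverse inclusion, which is the actual content of the theorem, is not proved. Your reduction to the kernel of $M_y\colon\xi\mapsto e(g\xi)-\xi y$ is correct, but the step you yourself label as key --- that $\det M_y$ depends only on $\tr(y)$ and $\norm(y)$ --- is asserted with no argument, and it is not routine: the associative-case proof (writing $R_{uyu^{-1}}=R_{u^{-1}}R_yR_u$ and using that left and right multiplications commute) breaks down in an octonion algebra, where $M_y=L_eL_g-R_y$ and $L_eL_g$ does not commute with right multiplications, nor is $R_{uyu^{-1}}$ equal to $R_{u^{-1}}R_yR_u$; an automorphism-group argument also fails because automorphisms moving $y$ move $e$ and $g$ as well. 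Your alternative route has the same status: a tangent-space computation at $\gamma=1$ can at best give dominance or local surjectivity, which does not yield surjectivity on $F$-rational points over an arbitrary field $F$ (octonion division algebras live over $\mathbb{R}$, number fields, etc., and there are also characteristic-$2$ separability issues), so the ``homogeneity'' appeal does not close the argument. The paper avoids all of this with a short explicit construction: since $eg$ and $ge$ are conjugate, any $y$ in the class can be written $y=\delta(ge)\delta^{-1}$ with $\tr(\delta)=0$ (by the remark preceding the theorem); setting $\gamma=\delta g$ one gets $g\gamma^{-1}=\delta^{-1}$, which is a scalar multiple of $\delta$, and the Moufang identity then gives $\delta(ge)\delta^{-1}=(\delta g)(e\delta^{-1})=\gamma(e(g\gamma^{-1}))$. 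Without either this construction or a genuine proof of your determinant claim, your argument establishes only one of the two inclusions.
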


\begin{proof}
The left-to-right inclusion follows from the fact that every element of the form $\gamma (e (g \gamma^{-1}))$ has the same trace and norm as $eg$:
The norm is multiplicative, so it follows immediately that $\norm(\gamma (e (g \gamma^{-1})))=\norm(eg)$.
The trace of $\gamma (e (g \gamma^{-1}))$ is $\langle 1,\gamma (e (g \gamma^{-1})) \rangle$ where $\langle \ , \rangle$ is the polarization of the norm form.
Now,
$$\langle 1,\gamma (e (g \gamma^{-1})) \rangle=\langle \overline{\gamma}, e (g\gamma^{-1}) \rangle=\langle \overline{e}\overline{\gamma}, g\gamma^{-1} \rangle=\langle (\overline{e} \overline{\gamma}) \overline{\gamma}^{-1},g \rangle=\langle \overline{e},g \rangle=\langle 1,eg \rangle=\tr(eg).$$
The computation makes use of the well-known identity $\langle xy,z \rangle=\langle y,\overline{x}z \rangle=\langle x,z\overline{y} \rangle$ that can be found in \cite[Lemma 1.3.2]{SpringerVeldkamp}.

For the opposite inclusion, we note that $eg$ and $ge$ are conjugates, so every element in the right set can be written as $\delta (ge) \delta^{-1}$ for some $\delta \in A^\times$ of $\tr(\delta)=0$. Set $\gamma=\delta g$, and then $\delta^{-1}=g \gamma^{-1}$. Now, $\delta^{-1}$ is a scalar multiple of $\delta$ (because $\tr(\delta)=0$), and by the Moufang identity $(xyx)z=x(y(xz))$ we obtain
$$\delta (ge) \delta^{-1}=(\delta g)(e \delta^{-1})=\gamma (e (g \gamma^{-1})).$$
\end{proof}

\begin{cor}
Given a standard monic polynomial $\phi(z)$ over an octonion division algebra $A$ over a field $F$ with companion polynomial $\Phi(z)$, we have $R(\Phi)=REV(C_\phi)$.
\end{cor}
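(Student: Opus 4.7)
The plan is to upgrade the inclusion $REV(C_\phi) \subseteq R(\Phi)$ already established in Theorem \ref{REV} to an equality by proving the reverse inclusion $R(\Phi) \subseteq REV(C_\phi)$. Since every coefficient of $\Phi(z)$ lies in $F$, the set $R(\Phi)$ is a union of conjugacy classes of $A$, so it suffices to fix an arbitrary conjugacy class $[z_0] \subseteq R(\Phi)$ of trace $T_0$ and norm $N_0$ and show $[z_0] \subseteq REV(C_\phi)$. I will split on the two cases singled out in Theorem \ref{REV}(2).

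If $E(N_0,T_0) = 0$, then Theorem \ref{REV}(2) gives $[z_0] \subseteq REV(C_\phi)$ directly and there is nothing more to do. So assume $E := E(N_0,T_0) \neq 0$ and write $G := G(N_0,T_0)$. Theorem \ref{REV}(2) then identifies
$$[z_0] \cap REV(C_\phi) = \{-\gamma(E^{-1}(G\gamma^{-1})) : \gamma \in A^\times\},$$
and the task reduces to recognizing this parametrized set as the full class $[z_0]$.

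The central step is to invoke Theorem \ref{conjugacy} with $e = E^{-1}$ and $g = G$ (the degenerate case $G = 0$ will be handled separately), which rewrites the set, after pulling the minus sign through the conjugation, as
$$\{-\delta(E^{-1}G)\delta^{-1} : \delta \in A^\times\},$$
namely the conjugacy class of $-E^{-1}G$. By Theorem \ref{classes}, $-E^{-1}G$ is the unique representative of $[z_0]$ in $R(\phi)$, so in particular it lies in $[z_0]$ and its conjugacy class is exactly $[z_0]$. Chaining the identifications yields $[z_0] \cap REV(C_\phi) = [z_0]$, i.e. $[z_0] \subseteq REV(C_\phi)$, as required. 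In the degenerate subcase $G = 0$, the parametrized set collapses to $\{0\}$ and simultaneously $-E^{-1}G = 0 \in [z_0]$; since the octonion algebra is a division algebra, the only element of norm and trace zero is $0$ itself, so $[z_0] = \{0\}$ and equality again holds trivially.

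The main obstacle is not computational but conceptual: one must recognize that the Moufang-style triple product $\gamma(e(g\gamma^{-1}))$ appearing in the description of $REV(C_\phi)$ sweeps out the full conjugacy class of $eg$ as $\gamma$ ranges over $A^\times$. Fortunately that recognition is exactly the content of Theorem \ref{conjugacy}, so once it is in hand the corollary follows by a short bookkeeping argument tracking the unique representative provided by Theorem \ref{classes}.
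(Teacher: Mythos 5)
Your proof is correct and follows essentially the same route as the paper: apply Theorem \ref{REV} to split on whether $E(N_0,T_0)$ vanishes, then use Theorem \ref{conjugacy} to identify the parametrized set with the conjugacy class of $-E(N_0,T_0)^{-1}G(N_0,T_0)$, which Theorem \ref{classes} places in $[z_0]$. Your separate treatment of the degenerate case $G(N_0,T_0)=0$ (where Theorem \ref{conjugacy}'s hypothesis that both elements be nonzero fails) is a small extra care the paper omits, but it does not change the argument.
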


\begin{proof}
\sloppy By Theorem \ref{REV}, for each conjugacy class $[z_0]$ in $R(\Phi)$, either the entire conjugacy class is in $R(\phi)$ (which happens when $E(N_0,T_0)=0$) and then it is also in $REV(C_\phi)$, or the intersection with $REV(C_\phi)$ is of the form $\{-\gamma \big(E(N_0,T_0)^{-1} (G(N_0,T_0) \gamma^{-1})\big) : \gamma \in A^\times\}$ (which happens when $E(N_0,T_0) \neq 0$). By Theorem \ref{conjugacy}, this set is the conjugacy class of $-E(N_0,T_0)^{-1} G(N_0,T_0)$, which is $[z_0]$.
\end{proof}

\section*{Acknowledgements}
The author is indebted to Seidon Alsaody for many illuminating discussions concerning this paper and other problems related to octonion algebras. 
The author was visiting Perimeter Institute for Theoretical Physics in the Summer of 2018, during which a major part of this project was carried out.

\end{document}